\newtheorem{theorem}{Theorem}[section]
\newtheorem{corollary}[theorem]{Corollary}
\theoremstyle{definition}
\newtheorem{remark}[theorem]{Remark}
\author{J. M. Almira and Z. Boros}
\title{A dichotomy property for the graphs of monomials}
\thanks{Research of the second author is supported by
the Hungarian Scientific Research Fund (OTKA) grant K-111651.}
\begin{document}
\keywords{}


\subjclass[2010]{}


\begin{abstract}
We prove that the graph of a  discontinuous $n$-monomial  function
$f:\mathbb{R}\to\mathbb{R}$ is either connected or totally disconnected.
Furthermore, the discontinuous monomial functions with connected graph
are cha\-rac\-terized as those satisfying a certain big graph property.
Finally, the connectedness properties of the graphs of additive functions
$f:\mathbb{R}^d\to\mathbb{R}$ are studied.
\end{abstract}

\maketitle

\markboth{J. M. Almira and Z. Boros}{Dichotomy property for monomials}

\section{Motivation}
F. B. Jones \cite{jones} proved in 1942, in a famous paper,
the existence of additive discontinuous functions
$f:\mathbb{R}\to\mathbb{R}$ whose graph
$G(f)=\{(x,f(x)):x\in\mathbb{R}\}$ is connected, and characterized them.
These functions  are extraordinary since
their graphs are dense connected subsets of the plane,
containing exactly one point in each vertical line $\{x\}\times\mathbb{R}$ \cite{sanjuan}.
In his paper the author also stated, without proof,  that
the graph of a discontinuous additive function must be connected or totally disconnected.
For this result he just referenced another famous paper, by Hamel \cite{hamel},
but the proof is not there.
Indeed, up to our knowledge, a proof of this dichotomy result
has never appeared in the literature.
In this note we prove that the graph of a discontinuous monomial is
either connected or totally disconnected,
and we characterize the discontinuous monomial functions
$f:\mathbb{R}\to\mathbb{R}$ with connected graph
by means of a big graph property.
We also study the connected components of the graphs of additive functions
$f:\mathbb{R}^d\to\mathbb{R}$ for $d\geq 1$.
These results should be a good starting point to prove that,
for larger classes of functions,
such as the generalized polynomials or exponential polynomials over the real line,
the graphs of the elements of these sets are either connected or totally disconnected.
To find examples of both situations is an easy corollary of the structure of these functions
and Jones's existence result of additive discontinuous functions with connected graph.

\section{Dichotomy property for monomials}
Recall that $f:\mathbb{R}\to\mathbb{R}$ is an $n$-monomial function
if it is a solution of the so called monomial functional equation
\begin{equation}
\label{monomials}
\frac{1}{n!}\Delta^{n}_hf(x)=f(h) \ \ (x,h\in\mathbb{R}).
\end{equation}
It is known that $f$ satisfies $(\ref{monomials})$ if and only if
$f(x)=F(x,\cdots,x)$
for a certain multi-additive and symmetric function
$F:\mathbb{R}^n\to\mathbb{R}$,
and that $f$ is a polynomial function of degree at most $n$
(i.e., $f$ solves Fr\'{e}chet's functional equation $\Delta^{n+1}_hf(x)=0$)
if and only if $f(x)=\sum_{k=0}^nf_k(x)$,
where $f_k(x)$ is a $k$-monomial function for $k=0,1,\cdots, n$.
(See, for example, \cite{czerwik}, \cite{kuczma}, for the proofs of these claims).

\begin{theorem}
[Dichotomy, for monomial functions $f:\mathbb{R}\to\mathbb{R}$]
\label{main1}
Let $f:\mathbb{R}\to\mathbb{R}$ be a $n$-monomial function.
Then $G(f)$ is either connected or totally disconnected.
Furthermore, both cases are attained by concrete examples of discontinuous $n$-monomials
$f:\mathbb{R}\to\mathbb{R}$, for every $ n \in \mathbb{N} $.
\end{theorem}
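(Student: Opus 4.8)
The plan is to exploit the single robust symmetry available for monomials, namely the rational dilations, together with the decisive elementary fact that a graph meets every vertical line in exactly one point. Writing $S_q(x,y)=(qx,q^{n}y)$ for $q\in\mathbb{Q}\setminus\{0\}$, the homogeneity relation $f(qx)=q^{n}f(x)$, valid for all rational $q$ (an immediate consequence of the multi-additive and symmetric representation $f(x)=F(x,\dots,x)$), shows that each $S_q$ is a homeomorphism of $\mathbb{R}^2$ carrying $G(f)$ onto itself and fixing the origin $(0,0)=(0,f(0))$. The first step is therefore to examine the connected component $C_0$ of the origin. Since every $S_q$ fixes $(0,0)$ and preserves $G(f)$, it maps $C_0$ onto $C_0$; hence the projection $\pi_x(C_0)$ is a subinterval of $\mathbb{R}$ containing $0$ that is invariant under multiplication by every nonzero rational. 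An interval through the origin with this invariance is either $\{0\}$ or all of $\mathbb{R}$. If it equals $\mathbb{R}$, then $C_0$ meets every vertical line; as $G(f)$ meets each vertical line only once, this forces $C_0=G(f)$, so $G(f)$ is connected.

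It remains to show that the alternative $\pi_x(C_0)=\{0\}$ forces $G(f)$ to be totally disconnected, and this is the heart of the matter. Here I would use the observation that, because $G(f)$ is a graph, \emph{distinct components have disjoint $x$-projections}: a point lying in two projections would be the unique graph point over that abscissa and so would belong to both components. Assuming $C_0=\{(0,0)\}$, suppose toward a contradiction that some component $C$ is nondegenerate; then $J:=\pi_x(C)$ is a nondegenerate interval with $0\notin J$ (otherwise $C$ would be the origin's component), and after applying $S_{-1}$ if necessary we may assume $J\subseteq(0,\infty)$. If $J$ has a finite endpoint in $(0,\infty)$, then for rational $q\ne 1$ close to $1$ the component $S_q(C)$ differs from $C$ (their projections $qJ$ and $J$ are different intervals), yet $qJ\cap J\ne\emptyset$, contradicting the disjointness of projections of distinct components. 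If instead $J=(0,\infty)$, then $C$ is invariant under all positive rational dilations, so dilating any point of $C$ by $1/k$ yields points of $C$ converging to $(0,0)$; thus $C\cup\{(0,0)\}$ is connected and $(0,0)$ lies in a component strictly larger than $\{(0,0)\}$, again a contradiction. Hence no nondegenerate component exists and $G(f)$ is totally disconnected. I expect this exclusion of an ``intermediate'' component to be the main obstacle, precisely because monomials lack the translation symmetry that trivializes the additive case; the argument must instead be carried entirely by dilations together with the one-point-per-fibre property.

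For the realization of both alternatives by discontinuous examples, for every $n$, the plan is to pass through additive functions. If $a:\mathbb{R}\to\mathbb{R}$ is additive, then $F(x_1,\dots,x_n)=a(x_1)\cdots a(x_n)$ is symmetric and multi-additive, so $f=a^{n}$ is an $n$-monomial, and $G(f)$ is the image of $G(a)$ under the continuous map $(x,y)\mapsto(x,y^{n})$. Taking $a$ to be a Jones additive function, whose graph is connected and dense, makes $G(f)$ connected (a continuous image of a connected set) and still dense, so $f$ is a discontinuous $n$-monomial with connected graph. For the totally disconnected case, take $a$ to be the rational functional ``coefficient of~$1$'' relative to a Hamel basis containing $1$; then $a(\mathbb{R})=\mathbb{Q}$ and each level set $a^{-1}(q)$ is a coset of a proper dense $\mathbb{Q}$-hyperplane, hence contains no interval. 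Consequently $f=a^{n}$ has countable range (so $f$ is not continuous), every connected subset of $G(f)$ projects to a single value of $f$ and therefore lives over a set containing no interval, which forces it to be a single point. This produces a discontinuous $n$-monomial with totally disconnected graph for each $n$, completing the plan.
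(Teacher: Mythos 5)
Your argument is correct, and although it runs on the same engine as the paper's proof --- the rational dilations $(x,y)\mapsto(qx,q^{n}y)$, which preserve $G(f)$ because $f(qx)=q^{n}f(x)$ for rational $q$ --- you organize it around a different pivot. The paper takes an arbitrary nondegenerate component $H$, forms the connected unions $\bigcup_{k\in\mathbb{Z}}\phi_{q,k}(H)$, and runs a case analysis on $\inf I$ and $\sup I$ for $I=\pi_1(H)$, with a separate accumulation-point argument to exclude $I=(0,\infty)$. You instead anchor on the component $C_0$ of the origin, which is fixed by every dilation and hence projects onto a dilation-invariant interval through $0$, i.e.\ onto $\{0\}$ or $\mathbb{R}$; the residual work is to show that $C_0=\{(0,0)\}$ excludes any other nondegenerate component, which you do via the clean observation that distinct components of a graph have disjoint $x$-projections (so a dilate $S_q(C)$ with $qJ\neq J$ but $qJ\cap J\neq\emptyset$ is an immediate contradiction), plus the same accumulation-at-the-origin argument when $J=(0,\infty)$. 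This buys a shorter and more transparent case analysis; the paper's version, in exchange, identifies the nondegenerate component explicitly as $G_{+}^{*}(f)$ along the way. Your examples differ mildly from the paper's ($a(x)^{n}$ rather than $x^{n-1}a(x)$ for the connected case) and both work. One small point to patch in the totally disconnected example: for even $n$ the level set $f^{-1}(c)=a^{-1}(q)\cup a^{-1}(-q)$ is a union of \emph{two} cosets, and a union of two sets each without interior can a priori contain an interval, so your one-coset justification does not quite cover it; the quickest fix is to note that if $f$ were constant on a nondegenerate interval then $\Delta_h^{n}f(x)=0$ there for small $h$, so $f(h)=0$ for all small $h$ and hence $f\equiv 0$ by rational homogeneity, contradicting discontinuity.
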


\begin{proof}
Let $f$ be an $n$-monomial. Suppose that $G(f)$ is not totally disconnected.
Then there exists a connected component $H\subset G(f)$
containing at least two different points $(x_1,y_1)$ and $(x_2,y_2)$.
Clearly, $x_1\neq x_2$.
Hence, if $\pi_1:\mathbb{R}^2\to\mathbb{R}$ denotes the horizontal projection of the plane,
$\pi_1(x,y)=x$, then
$I=\pi_1(H)=\{x\in\mathbb{R}: (x,f(x))\in H\}$
is a connected subset of $\mathbb{R}$ which contains two distinct points.
Hence $I$ is an interval with non-empty interior and $H=\{(x,f(x)):x\in I\}$.
Set $\alpha=\inf I$ and $\beta=\sup I$.
Obviously, $\alpha,\beta\in \mathbb{R}\cup\{-\infty,+\infty\}$ and $\alpha<\beta$.
In particular either $\beta>0$ or $\alpha<0$.
Assume, with no loss of generality, that $\beta>0$
(the other case can be treated with similar arguments, or reduced to this one
by using that every monomial is either even or odd,
since $f(rx)=r^nf(x)$ for all rational number $r$ and all $x\in\mathbb{R}$,
which implies that $f(-x)=(-1)^n f(x)$ for all $x\in\mathbb{R}$).

Given $q\in\mathbb{Q}\setminus\{0\}$ we consider the maps
\[
\phi_{q,k}(x,y)=(q^kx,q^{kn}y); \ \text{ whenever  }
(x,y)\in\mathbb{R}^2 \text{ and } k\in\mathbb{Z}.
\]
Then $\phi_{q,k}:\mathbb{R}^2\to\mathbb{R}^2$ is continuous.
Hence $H_{q,k}=\phi_{q,k}(H)$ is a connected subset of the plane for every $k\in\mathbb{Z}$.
Given $k\in\mathbb{Z}$, we have that $H_{q,k}\cap H_{q,k+1}\neq \emptyset$
if and only if  for some $(x,f(x)), (x^*,f(x^*))\in H$
the equality $\phi_{q,k}(x,f(x))=\phi_{q,k+1}(x^*,f(x^*))$ holds.
In other words, this intersection is nonempty if and only if
\[
(q^kx,q^{kn}f(x))=(q^{k+1}x^*,q^{(k+1)n}f(x^*)) \text{ for certain } x,x^*\in I.
\]
Forcing equality between the first components of these vectors we get $q^{k}x=q^{k+1}x^*$,
which means that $x^*=\frac{1}{q}x$.
Furthermore, under this restriction,
we get the equality between the second components of the vectors for free, since
\[
q^{(k+1)n}f(x^*)= q^{(k+1)n}f \left( \frac{1}{q}x \right)
= q^{(k+1)n} \left(\frac{1}{q}\right)^nf(x) = q^{kn}f(x).
\]
Thus, we have demonstrated that $H_{q,k}\cap H_{q,k+1}\neq \emptyset$
if and only if there exist $x\in \mathbb{R}$ such that $\{x,\frac{1}{q} x\}\subset  I$.
In particular, when this holds true,
the property is satisfied for all $k\in\mathbb{Z}$ simultaneously and
$\widetilde{H}_q=\bigcup_{k\in\mathbb{Z}}H_{q,k}$ is connected.
Furthermore, $(t,f(t))\in G(f)$ implies that
$\phi_{q,k}(t,f(t))=(q^kt,(q^{k})^nf(t))= (q^kt,f(q^kt))\in G(f)$,
so that $H_{q,k}=\phi_{q,k}(H)\subseteq G(f)$ for all $k\in\mathbb{Z}$.
Hence $\widetilde{H}_q$ is always a subset of $G(f)$.

Assume, by the moment, that $0<\alpha$ and $\beta<\infty$,
and take $q\in\mathbb{Q}$ such that $1<q<\frac{\beta}{\alpha}$.
Then  $0<\alpha$ and  $1<q<\frac{\beta}{\alpha}$ imply that $\alpha<q\alpha <\beta$.
Take $x$ such that $q\alpha < x<\beta$.
Then $x\in I$ and $\alpha <\frac{1}{q}x<\frac{1}{q}\beta<\beta$,
so that $\frac{1}{q}x\in I$ too.
Thus, in this case, $\widetilde{H}_q$ is a connected subset of $G(f)$.
But we also have, in this case, that  $\widetilde{H}_q=G_{+}^*(f):= \{(x,f(x)): x>0\}$.
This implies that $(0,\infty)\subseteq I$,
which contradicts both $\alpha >0$ and $\beta <+\infty$.
Hence either $ \alpha \leq 0 $ or $\beta=+\infty$.

If $0<\alpha <\beta=+\infty$ and $0<x^*\in I$, then  for any $q>1$,
$x=qx^*$ and $\frac{1}{q}x=x^*$ both belong to $I$,
so that $\widetilde{H}_q= G_{+}^*(f)$ is connected, which contradicts $0<\alpha$.

If $ \alpha \leq 0 $ and $ 0 < x \in I $, then  for any $q>1$,
$x^*=\frac{1}{q}x$ and $x=qx^*$ both belong to $I$,
so that $\widetilde{H}_q$ is connected and contains $G_{+}^*(f)$.
This forces $\beta=+\infty$ again.
If, in particular, $ \alpha < 0 $, we have $ 0 \in I $, and we obtain,
analogously to the previous arguments, that $(-\infty,0)\subseteq I$.
In this case we thus have $ I = \mathbb{R} $ and $ H = G(f) $.

Finally, let us consider the case $ \alpha = 0 $.
Then either $ I = [0,\infty) $ or $ I = (0,\infty) $.
In the former case
$ H = G_{+}(f) := \{(x,f(x)): x\geq 0\} $ is connected.
Furthermore,  if we define $\varphi(x,y)=(-x,(-1)^n y)$, it is clear that
$ G_{-}(f) := \{(x,f(x)): x\leq 0\} = \varphi(G_{+}(f)) $
is also a connected subset of $G(f)$.
Furthermore, $ (0,0) = (0,f(0)) \in G_{+}(f) \cap G_{-}(f) $, so that
$ G(f) = G_{+}(f) \cup G_{-}(f) $ is connected.
In the latter case, when $ I = (0,\infty) $,
we have that $ G_{+}^*(f) $ is connected and
$ G_{+}(f) = G_{+}^*(f) \cup \{(0,0)\} $ is disconnected.
Hence there exist open sets $ U \subset \mathbb{R}^2 $ and $ V \subset \mathbb{R}^2 $
such that $ U \cap V = \emptyset $,
$ U \cap G_{+}(f) \neq \emptyset $, $ V \cap G_{+}(f) \neq \emptyset $,
and $ G_{+}(f) \subseteq U \cup V $.
We may assume, with no loss of generality, that $ (0,0) \in U $.
Then $ V \cap G_{+}^*(f) \neq \emptyset $.
Since
\[
(0,0) =
\lim_{m \to \infty} \left( \frac{1}{m} \,,\, \frac{1}{m^n} f(1) \right) =
\lim_{m \to \infty} \left( \frac{1}{m} \,,\, f \left( \frac{1}{m} \right) \right)
\]
is an accumulation point of $ G_{+}^*(f) \,$,
we obtain $ U \cap G_{+}^*(f) \neq \emptyset $ as well.
This yields that $ G_{+}^*(f) $ is disconnected, which is a contradiction.

Till now, we have demonstrated that,
if $f:\mathbb{R}\to\mathbb{R}$ is a monomial function,
then $G(f)$ is either connected or totally disconnected.
Let us now show that both cases are attained by concrete examples.
For totally disconnected graphs the example can be easily constructed.
Indeed, given  $\gamma$ any Hamel basis of $\mathbb{R}$ satisfying $1\in\gamma$
and let $n\in\mathbb{N}$ be a positive integer, we consider
$A_{\gamma}:\mathbb{R}\to\mathbb{R}$, the unique $\mathbb{Q}$-linear map
which satisfies $A_{\gamma}(1)=1$ and $A_{\gamma}(b)=0$
for every $b\in\gamma\setminus\{1\}$.
Obviously $f_n(x)=A_{\gamma}(x)^n$ is an $n$-monomial and
$f_{n}(\mathbb{R})\subseteq \mathbb{Q}$.
Hence the graph of $f_{n}$ is totally disconnected.

The  existence of discontinuous $n$-monomials with connected graph
follows from the existence of discontinuous additive functions
$f:\mathbb{R}\to\mathbb{R}$ with connected graph $G(f)$,
a fact that was demonstrated by Jones by using
a nontrivial set theoretical argument on ordinals \cite[Theorems 4 and 5]{jones}.
Indeed, assume that
$f:\mathbb{R}\to\mathbb{R}$ is additive, discontinuous, and $G(f)$ is connected.
Then $F(x)=x^{n-1}f(x)$ is a discontinuous $n$-monomial function with connected graph,
since the function $\phi:\mathbb{R}^2\to\mathbb{R}^2$ given by $\phi(x,y)=(x,x^{n-1}y)$
is continuous and transforms the graph of $f$ onto the graph of $F$.
\end{proof}

\begin{corollary}
[Dichotomy, for additive functions $f:\mathbb{R}\to\mathbb{R}$]
\label{dichotomy}
Let $f:\mathbb{R}\to\mathbb{R}$ be an additive function.
Then $G(f)$ is connected or totally disconnected.
Furthermore, there exists discontinuous additive functions
$f:\mathbb{R}\to\mathbb{R}$ with connected graph $G(f)$.
\end{corollary}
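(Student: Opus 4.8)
The plan is to obtain this statement as an immediate consequence of Theorem \ref{main1}. The key observation is that the additive functions $f:\mathbb{R}\to\mathbb{R}$ are precisely the $1$-monomial functions. Indeed, specializing the monomial functional equation $(\ref{monomials})$ to $n=1$ gives $\Delta^{1}_h f(x)=f(x+h)-f(x)=f(h)$ for all $x,h\in\mathbb{R}$, which is exactly Cauchy's additive equation $f(x+h)=f(x)+f(h)$ (together with $f(0)=0$, which it forces by taking $x=h=0$). Conversely, every additive function clearly satisfies this relation. Thus additivity is nothing but the case $n=1$ of the monomial equation.

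Given this identification, the dichotomy part follows at once: applying Theorem \ref{main1} with $n=1$ shows that the graph $G(f)$ of any additive function $f:\mathbb{R}\to\mathbb{R}$ is either connected or totally disconnected. No further argument is needed here, since the entire machinery of the dilations $\phi_{q,k}$ and the case analysis on $\alpha=\inf I$ and $\beta=\sup I$ has already been carried out in full generality for arbitrary $n$, and in particular covers $n=1$.

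For the second assertion, the existence of \emph{discontinuous} additive functions with connected graph, I would simply invoke Jones's theorem \cite[Theorems 4 and 5]{jones}, exactly as was done inside the proof of Theorem \ref{main1}. This is the only nontrivial ingredient, and it is not something we reprove: it rests on Jones's transfinite set-theoretic construction on ordinals. Consequently, the sole genuine difficulty of the corollary lies entirely in a result we import rather than establish, and the corollary itself requires only the remark that additivity is the $n=1$ instance of $(\ref{monomials})$, so that both halves of the statement are inherited verbatim from Theorem \ref{main1} and its proof.
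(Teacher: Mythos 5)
Your proposal is correct and follows exactly the paper's route: the paper's entire proof of this corollary is the one-line observation that any additive function is a $1$-monomial function, so both the dichotomy and the existence of discontinuous examples with connected graph are inherited from Theorem \ref{main1}. Your additional verification that the $n=1$ monomial equation is precisely Cauchy's equation is a harmless (and reasonable) elaboration of the same argument.
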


\noindent \textbf{Proof. }
Any additive function is a $1$-monomial function.
{\hfill $\Box$}

\begin{remark}
If $G(f)$ is connected, we have two cases:
either $f$ is continuous and $G(f)=V$ is a one-dimensional vector space,
or $G(f)$ is a connected dense additive subgroup of $\mathbb{R}^2$.
\end{remark}

The following theorem may be also of interest:


\begin{theorem}
[$(d+2)$-chotomy property of additive functions] \label{d}
If $f:\mathbb{R}^d\to\mathbb{R}$ is an additive function, then:
\begin{itemize}
\item[$(a)$]
There exists $s\in\{0,1,\cdots,d+1\}$ such that
the connected component $G$ of $G(f)$ which contains the point $(0,0)$
is a dense subgroup of an $s$-dimensional vector subspace of $\mathbb{R}^{d+1}$.
Furthermore, every connected component of $G(f)$ results from $G$ by a translation.
\item[$(b)$]
All cases described in $(a)$ are attained by concrete examples.
 \end{itemize}
\end{theorem}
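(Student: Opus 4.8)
The plan is to deduce part $(a)$ from the theory of topological groups and to settle part $(b)$ with two explicit families. Since $f$ is additive, its graph $G(f)$ is a subgroup of $\mathbb{R}^{d+1}$: the sum of $(x,f(x))$ and $(x',f(x'))$ is $(x+x',f(x+x'))\in G(f)$, and inverses lie in $G(f)$ as well. With the subspace topology $G(f)$ is a topological group, so the connected component $G$ of the identity $(0,0)$ is a (closed) subgroup, and in any topological group the connected components are exactly the cosets of the identity component. This immediately gives the final clause of $(a)$: each component of $G(f)$ is a translate $(x_0,f(x_0))+G$ of $G$.

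To describe $G$ itself, I would pass to its closure $V=\overline{G}$ in $\mathbb{R}^{d+1}$. The closure of a subgroup is a subgroup and the closure of a connected set is connected, so $V$ is a closed connected subgroup of $\mathbb{R}^{d+1}$. By the standard classification of closed subgroups of $\mathbb{R}^{n}$ (each is the direct sum of a linear subspace and a discrete subgroup, and connectedness kills the discrete part), $V$ is a linear subspace, of some dimension $s$ with $0\le s\le d+1$; and $G$, being dense in its own closure, is a dense subgroup of this $s$-dimensional subspace. This proves $(a)$, and I expect it to be routine, its only non-elementary input being the description of closed subgroups of $\mathbb{R}^{n}$.

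The content of the theorem is part $(b)$. For $s=d+1$ I would take a discontinuous additive Jones function $g:\mathbb{R}\to\mathbb{R}$ with connected graph (recalled in the proof of Theorem~\ref{main1}) and set $f(x_1,\dots,x_d)=g(x_1)$; after a permutation of coordinates its graph becomes $G(g)\times\mathbb{R}^{d-1}$, a product of connected dense sets, hence connected and dense in $\mathbb{R}^{d+1}$, so $s=d+1$. For each $s\in\{0,1,\dots,d\}$ I would instead split the variables as $(u,w)\in\mathbb{R}^{s}\times\mathbb{R}^{d-s}$ and set $f(u,w)=h(w)$, where $h:\mathbb{R}^{d-s}\to\mathbb{R}$ is additive with totally disconnected graph (for $s=d$ this is simply $f\equiv0$). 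Up to a permutation of coordinates the graph of $f$ is then the genuine topological product $\mathbb{R}^{s}\times G(h)$, whose identity component is $\mathbb{R}^{s}\times\{0\}$, an honest $s$-dimensional subspace; so every value $s\in\{0,\dots,d+1\}$ is attained.

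The step I expect to be the main obstacle is the construction, for $m=d-s\ge1$, of an additive $h:\mathbb{R}^{m}\to\mathbb{R}$ whose graph is totally disconnected: the naive rational-valued choice $h(w)=A_{\gamma}(w_1)$ fails, since its kernel contains $\{0\}\times\mathbb{R}^{m-1}$ and hence large connected fibres. The remedy I would use is to fix $\mathbb{Q}$-linearly independent reals $\theta_1,\dots,\theta_m$ (say $1,\pi,\dots,\pi^{m-1}$) and put $h(w)=\sum_{i=1}^{m}\theta_i\,A_{\gamma}(w_i)$. Its values lie in the countable, totally disconnected set $\sum_i\theta_i\mathbb{Q}$, so each connected component of $G(h)$ sits inside one fibre $h^{-1}(c)$; and since the $\theta_i$ are $\mathbb{Q}$-independent while every $A_{\gamma}(w_i)$ is rational, the kernel is forced to be the product $(\ker A_{\gamma})^{m}$. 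As $G(A_{\gamma})$ is totally disconnected and $\ker A_{\gamma}\times\{0\}\subseteq G(A_{\gamma})$, the factor $\ker A_{\gamma}$ is totally disconnected, hence so is the product; each fibre is a coset of $\ker h$ and therefore totally disconnected, forcing every component of $G(h)$ to be a single point. Working with such product-type examples, rather than with sums $g(v)+h(w)$ that would entangle a connected and a disconnected direction through the shared last coordinate, is exactly what lets one pin down the identity component from above and bypass the far more delicate connectedness analysis those entangled graphs would demand.
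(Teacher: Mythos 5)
Your proof of part $(a)$ is essentially the paper's: both pass to the identity component $G$, invoke the fact that the connected components of a topological group are the cosets of its identity component, take the closure $\overline{G}$, and apply the classification of closed subgroups of $\mathbb{R}^{n}$ as $V\oplus\Lambda$, with connectedness eliminating the discrete part. Where you genuinely diverge is part $(b)$. The paper disposes of it in one sentence: take $f(x_1,\dots,x_d)=A_1(x_1)+\cdots+A_d(x_d)$ and ``use the dichotomy result for each $A_k$.'' Taken literally this is not sufficient: the dichotomy for the individual $A_k$ does not by itself determine the identity component of the graph of the sum. Indeed, if every $A_k$ equals the rational-valued $A_{\gamma}$, then each summand has totally disconnected graph, yet for $d\geq 2$ the kernel of the sum contains the line $\{(t,-t,0,\dots,0):t\in\mathbb{R}\}$, so the graph of the sum is not totally disconnected and the case $s=0$ is not reached this way. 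Your product-type examples $f(u,w)=h(w)$ with $h(w)=\sum_{i}\theta_i A_{\gamma}(w_i)$ for $\mathbb{Q}$-independent $\theta_i$ repair exactly this defect: the $\mathbb{Q}$-independence forces $\ker h=(\ker A_{\gamma})^{m}$, which is totally disconnected because $\ker A_{\gamma}$ is a proper subgroup of $\mathbb{R}$; the countability of the value set $\sum_i\theta_i\mathbb{Q}$ confines each component of $G(h)$ to a single fibre; and, after permuting coordinates, the graph of $f$ is the honest product $\mathbb{R}^{s}\times G(h)$, whose identity component is visibly $\mathbb{R}^{s}\times\{0\}$. Together with the Jones function giving $s=d+1$, every value of $s$ is realized. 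So your argument is correct and, for part $(b)$, is both a different construction and a more complete one than the paper's sketch, at the cost of the extra (but necessary) lemma on totally disconnected additive functions of several variables.
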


\noindent \textbf{Proof. } $(a)$
Previous to introduce the main argument,
it is necessary to recall two basic facts about topological groups.
Concretely, if $G$ is a topological group, and $G_0$ denotes the identity component of $G$
(i.e., the biggest connected subset of $G$ which contains the identity $e\in G$),
then $G_0$  is a closed normal subgroup of $G$.
Furthermore, the elements of  the quotient group $G/G_0$ are
just the connected components of $G$ \cite{P}.
We consider $f:\mathbb{R}^d\to\mathbb{R}$ an additive function
and we set $G(f)=\{(x,f(x)):x\in \mathbb{R}^d\}$.  Obviously,
the additivity of $f$ implies that $G(f)$ is an additive subgroup of $\mathbb{R}^{d+1}$.
Let $G$ be the connected component of $G(f)$ which contains the zero element.
Then every connected component of $G(f)$ results from $G$ by a translation.

$G$ is a connected additive subgroup of $\mathbb{R}^{d+1}$.
Hence, its topological closure $\overline{G}$ is also a connected subgroup of $\mathbb{R}^{d+1}$.
It is known that the topological closure of any additive subgroup $H$ of $\mathbb{R}^{d+1}$
satisfies $\overline{H}=V\oplus \Lambda$ for a certain vector subspace $V$ of $\mathbb{R}^{d+1}$
and a discrete additive subgroup $\Lambda$ of $\mathbb{R}^{d+1}$
(see  \cite[Theorem 3.1]{Wald} for a proof of this fact).
It follows that $\overline{G}=V$ for a certain vector subspace $V$ of
$\mathbb{R}^{d+1}$. Hence every connected component of $G(f)$ is
the translation $\tau+G$ of a dense connected additive subgroup $G$
of the vector space $V$ for some $\tau\in \mathbb{R}^{d+1}$.
Note that, if $V=\{0\}$ then $G(f)$ is totally disconnected and, if $V=\mathbb{R}^{d+1}$,
then $G(f)$ is a connected dense additive subgroup of $\mathbb{R}^{d+1}$.
All the other cases represent an intermediate situation.
For example, if $f$ is continuous, then
$G(f)=V$ is a $d$-dimensional vector subspace of $\mathbb{R}^{d+1}$.

\noindent $(b)$
All cases described by Theorem \ref{d} can be constructed easily,
since all functions $f:\mathbb{R}^d\to\mathbb{R}$ of the form
$f(x_1,\cdots,x_d)=A_1(x_1)+A_2(x_2)+\cdots +A_d(x_d)$,
with $A_k:\mathbb{R}\to\mathbb{R}$ additive for each $k$, are additive,
and we can use the dichotomy result for each one of these functions $A_k$, $k=1,\cdots,d$.

{\hfill $\Box$}

\begin{remark}
While searching in the literature for a demonstration of Corollary \ref{dichotomy},
the first author commented this question to Professor L\'{a}szl\'{o} Sz\'{e}kelyhidi,
who also was unable to find the proof nowhere.
Then, he got a very nice independent proof of the result \cite{laszlo}.
Indeed, for $d=1$ we get the dichotomy result as follows (this is Sz\'{e}kelyhidi's idea):
Let  $\pi_1:\mathbb{R}\times \mathbb{R}\to \mathbb{R}$ denote the horizontal projection
$\pi_1(x,y)=x$,
and let $W=\pi_1(G)$ be the projection of the connected component $G$ of $G(f)$
which contains the zero element.
Then $\pi_1(G)=\{0\}$ or $\pi_1(G)=\mathbb{R}$,
since the only connected subgroups of the real line are $\{0\}$ and $\mathbb{R}$.
Thus, if $G(f)$ is not totally disconnected, then $\pi_1(G)=\mathbb{R}$,
which implies $G=G(f)$ and hence, $G(f)$ is connected.
Unfortunately, this simple proof seems to be very difficult to generalize
for the case of monomial functions $f:\mathbb{R}\to\mathbb{R}$,
since the graph of an $n$-monomial function is in general not an additive subgroup of $\mathbb{R}^2$.
We hope this justifies to introduce the proof of Theorem \ref{main1}.
\end{remark}

\section{A big-graph property}

Recently, Almira and Abu-Helaiel  characterized
the topological closures of the graphs of monomial functions as follows
\cite[Theorem 2.7]{AK_monomials}:

\begin{theorem}[Almira, Abu-Helaiel] \label{Graph_Monomials}
Assume that $f:\mathbb{R}\to\mathbb{R}$ is a discontinuous $n$-monomial function,
let $\Gamma_f=\overline{G(f)}^{\mathbb{R}^2}$,
and let us consider the function $A_n(h)=f(h)/h^n$, for $h\neq 0$.
Let $\alpha=\sup_{h\in\mathbb{R}\setminus\{0\}}A_n(h)$ and
$\beta=\inf_{h\in\mathbb{R}\setminus\{0\}}A_n(h)$.  Then:
\begin{itemize}
\item[$(a)$]
If $\alpha=+\infty$ and $\beta=-\infty$, then $\Gamma_f =\mathbb{R}^2$.
\item[$(b)$]
If $\alpha=+\infty$ and $\beta\in\mathbb{R}$, then
$\Gamma_f =\{(x,y):y\geq \beta x^{n}\}$ if $n=2k$ is an even number, and
$\Gamma_f =\{(x,y):x\leq 0 \text{ and } y\leq \beta x^{n}\} \cup
           \{(x,y):x\geq 0\text{ and } y\geq \beta x^{n}\}$
if $n=2k+1$ is an odd number.
In particular, if $\beta=0$, we get
the half space $\Gamma_f =\{(x,y):y\geq 0\}$ for $n=2k$ and
the union of the first and third quadrants $\Gamma_f =\{(x,y):xy\geq 0\}$, for $n=2k+1$.
\item[$(c)$]
If $\alpha\in\mathbb{R}$ and $\beta=-\infty$, then
$\Gamma_f =\{(x,y):y\leq \alpha x^{n}\}$ if $n=2k$ is an even number, and
$\Gamma_f =\{(x,y):x\leq 0 \text{ and } y \geq \alpha x^{n}\} \cup
           \{(x,y):x\geq 0\text{ and } y\leq \alpha x^{n}\}$
if $n=2k+1$ is an odd number.
In particular, if $\alpha=0$, we get
the half space $\Gamma_f =\{(x,y):y\leq 0\}$ for $n=2k$ and
the union of the second and fourth quadrants $\Gamma_f =\{(x,y):xy\leq 0\}$, for $n=2k+1$.
\end{itemize}
Furthermore, for all $n\geq 2$ there are examples of discontinuous $n$-monomial functions $f$
verifying each one of the claims $(a),(b),(c)$ above.
\end{theorem}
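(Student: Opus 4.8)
The plan is to reduce the entire statement to a single one–dimensional fact about the range of the ratio $A_n$, and then read off the three cases. First I would record the elementary symmetries of $A_n(h)=f(h)/h^n$. Since $f$ is an $n$-monomial, $f(rh)=r^nf(h)$ for every $r\in\mathbb{Q}$, whence $A_n(rh)=A_n(h)$ for all nonzero rational $r$; together with $f(-h)=(-1)^nf(h)$ this gives $A_n(-h)=A_n(h)$. Consequently $G(f)$ is invariant under the power–scalings $\phi_\lambda(x,y)=(\lambda x,\lambda^n y)$ for $\lambda\in\mathbb{Q}\setminus\{0\}$, and—because $\Gamma_f$ is closed and the rationals are dense—$\Gamma_f$ is invariant under $\phi_\lambda$ for every real $\lambda\neq0$. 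Each $\phi_\lambda$-orbit of a point $(x_0,y_0)$ with $x_0\neq0$ is the punctured curve $\{(x,cx^n):x\neq0\}$ with $c=y_0/x_0^{\,n}$, and all such curves pass through the origin. Thus $\Gamma_f$ is a union of these ``power curves'' and is completely determined by its vertical slice over $x=1$.

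The second step computes that slice. Writing $R=A_n(\mathbb{R}\setminus\{0\})$ (which by evenness equals $A_n((0,\infty))$), the key sublemma is that $A_n$ attains every value of $R$ on every subinterval of $(0,\infty)$: if $c=A_n(h_0)$ then $c=A_n(rh_0)$ for each rational $r$, and $r$ may be chosen so that $rh_0$ lands in any prescribed interval. From this one gets, for each $x_0\neq0$, that $(x_0,y)\in\Gamma_f$ if and only if $y/x_0^{\,n}\in\overline{R}$; equivalently the slice over $x_0$ is $x_0^{\,n}\,\overline{R}$. Everything is thereby reduced to identifying $\overline{R}$.

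Two facts about $\overline{R}$ finish the argument, and here lies the real work. The first is that discontinuity of $f$ forces $A_n$ to be unbounded, so that at least one of $\alpha=\sup R$, $\beta=\inf R$ is infinite; this is exactly why only cases $(a),(b),(c)$ occur. I would obtain it from the regularity theory of monomials: if $|A_n|\le M$ then $|f(h)|\le M|h|^n$, so $f$ is bounded on $[-1,1]$, and a monomial bounded on a set of positive measure is continuous of the form $cx^n$, contradicting discontinuity. The second, and the main obstacle, is that $\overline{R}$ is an interval, i.e. has no gaps. I expect this to be the crux, because it cannot follow merely from connectedness of $\Gamma_f$ (all the power curves already meet at the origin, so $\Gamma_f$ is connected even when $R$ has gaps). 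The plan is to prove it by polarization together with the intermediate value theorem. Given $c_1=A_n(a)<c_2=A_n(b)$ with $a,b>0$, consider $w(s)=(1-s)a+sb$. For rational $s$, multiadditivity and $\mathbb{Q}$-homogeneity of the symmetric form $F$ expand $f(w(s))$ into a genuine real–coefficient polynomial,
\[
f(w(s))=\sum_{k=0}^{n}\binom{n}{k}\gamma_k\,s^{k},\qquad
\gamma_k=F\bigl((b-a)^{[k]},a^{[n-k]}\bigr),
\]
(here $u^{[j]}$ abbreviates the $j$-fold repeated argument $u,\dots,u$), with $\gamma_0=f(a)$ and $\sum_k\binom{n}{k}\gamma_k=f(b)$. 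Hence $P(s)=\bigl(\sum_k\binom{n}{k}\gamma_k s^k\bigr)/w(s)^{n}$ is continuous on $[0,1]$ (the denominator stays positive since $a,b>0$), satisfies $P(0)=c_1$, $P(1)=c_2$, and agrees with $A_n(w(s))$ at every rational $s$. By the intermediate value theorem $P$ attains each $c\in(c_1,c_2)$, and approximating the corresponding parameter by rationals shows $c\in\overline{R}$. Thus $\overline{R}$ is convex, and combined with the unboundedness it must be one of $\mathbb{R}$, $[\beta,+\infty)$, or $(-\infty,\alpha]$.

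Finally I would assemble the description. Substituting these three possibilities for $\overline{R}$ into the slice condition $y/x^{\,n}\in\overline{R}$ and splitting according to the sign of $x^{\,n}$ (that is, the parity of $n$) yields precisely the regions in $(a),(b),(c)$; the slice over $x=0$ is then forced by the fact that $\Gamma_f$ is closed and already coincides with the stated closed region off the $y$-axis. For the examples, fix a discontinuous additive $g:\mathbb{R}\to\mathbb{R}$, whose ratio $g(h)/h$ has range $\mathbb{R}$: then $f(x)=x^{n-1}g(x)$ realises $(a)$, while for $n\ge2$ the $n$-monomial $f(x)=x^{n-2}g(x)^2+\beta x^n$ gives $A_n(h)=(g(h)/h)^2+\beta$ with infimum $\beta$ and supremum $+\infty$, realising $(b)$ for any $\beta\in\mathbb{R}$; replacing $+g(x)^2$ by $-g(x)^2$ realises $(c)$.
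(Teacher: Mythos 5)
The paper never proves this statement: it is quoted verbatim from the earlier note of Almira and Abu--Helaiel \cite{AK_monomials} (their Theorem 2.7) and used as a black box, so there is no in-paper argument to compare yours against line by line. Your blind proof is, as far as I can check, correct and self-contained, and its architecture is sensible: (i) the reduction of $\Gamma_f$ to the closure $\overline{R}$ of the range of $A_n$, via invariance of $\Gamma_f$ under the real scalings $(x,y)\mapsto(\lambda x,\lambda^n y)$ and the observation that $A_n$ realises every value of its range on every subinterval, correctly yields that the slice of $\Gamma_f$ over $x_0\neq 0$ is $x_0^n\,\overline{R}$; (ii) you rightly identify the convexity of $\overline{R}$ as the crux --- it does not follow from connectedness of $\Gamma_f$ --- and your polarization expansion $f\bigl((1-s)a+sb\bigr)=\sum_k\binom{n}{k}\gamma_k s^k$ for rational $s$, extended to a genuine polynomial $P(s)$ and combined with the intermediate value theorem and rational approximation of the parameter, is a clean and valid way to close the gap; (iii) the exhaustiveness of cases $(a)$--$(c)$ via the regularity theorem (a monomial bounded on a set of positive measure is continuous) is correct, though that regularity theorem is a nontrivial external ingredient and should be cited (it is in \cite{kuczma} or \cite{czerwik}, via the polarization formula $F=\frac{1}{n!}\Delta_{x_1}\cdots\Delta_{x_n}f(0)$). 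Two small imprecisions worth fixing: in the examples you say ``fix a discontinuous additive $g$ whose ratio $g(h)/h$ has range $\mathbb{R}$'' --- the range need not be all of $\mathbb{R}$, but its closure is, and this holds \emph{automatically} for every discontinuous additive $g$ (if $g(h)/h$ were bounded below on $(0,\infty)$ by $\beta$, then $g(h)-\beta h$ would be additive and nonnegative on $(0,\infty)$, hence monotone, hence continuous); and the closure of the range, not the range itself, is what your example $f(x)=x^{n-2}g(x)^2+\beta x^n$ needs in order to conclude $\inf A_n=\beta$. Neither affects correctness.
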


We use this result to prove the following big graph property:

\begin{theorem}[Big graph property] \label{main}
Let $f:\mathbb{R}\to\mathbb{R}$ be a discontinuous $n$-monomial function and let
$\Gamma_f=\overline{G(f)}^{\mathbb{R}^2}$ and $\Omega_f = \mathrm{Int} (\Gamma_f)$.
Then $G(f)$ is connected if and only if $G(f)$ intersects all continuum
$K\subseteq \Omega_f$ which touches two distinct vertical lines.
\end{theorem}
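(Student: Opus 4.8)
The plan is to combine the dichotomy of Theorem~\ref{main1} with the explicit description of $\Gamma_f$ in Theorem~\ref{Graph_Monomials}. First I would record three structural facts to be used repeatedly. Since $\Gamma_f=\overline{G(f)}$, the graph $G(f)$ is dense in $\Gamma_f$ and hence in $\Omega_f$. Inspecting the list $(a)$--$(c)$ of Theorem~\ref{Graph_Monomials}, every connected component of $\Omega_f$ is a simply connected planar domain (the whole plane, a half plane, the region strictly above or below a power curve, or one of the two open ``quadrant-like'' pieces in the odd case). Finally, choosing $x_0$ with $f(x_0)\neq 0$, the relation $f(rx_0)=r^{n}f(x_0)$ for $r\in\mathbb{Q}$ shows that $G(f)$ is unbounded. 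Because of Theorem~\ref{main1} it suffices to prove the equivalence: $G(f)$ is totally disconnected if and only if there is a continuum $K\subseteq\Omega_f$ meeting two distinct vertical lines with $K\cap G(f)=\emptyset$.

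For the implication ``$G(f)$ connected $\Rightarrow$ $G(f)$ meets every such $K$'' I would argue by contradiction. Let $\pi_1(x,y)=x$, let $K\subseteq\Omega_f$ be a continuum with $\pi_1(K)=[c,d]$, $c<d$, and suppose $K\cap G(f)=\emptyset$. Then $K$ lies in a single component $\Omega$ of $\Omega_f$, and since $G(f)$ is connected and disjoint from the compact set $K$, it is contained in one component of $\mathbb{R}^2\setminus K$; as $G(f)$ is unbounded this is the unbounded component $W_{\infty}$. Suppose $K$ has a bounded complementary component $B$. Because $\Omega$ is simply connected and $K\subseteq\Omega$, the set $\mathbb{C}\setminus\Omega$ is connected and joins $\infty$, hence lies in $W_{\infty}$, so $B\subseteq\Omega\subseteq\Omega_f$. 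Then $B$ is a nonempty open subset of $\Omega_f$, and density of $G(f)$ forces $G(f)\cap B\neq\emptyset$, contradicting $G(f)\subseteq W_{\infty}$. This disposes of every $K$ whose complement is disconnected.

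The remaining, and main, difficulty is a continuum $K$ with connected complement (for instance an arc, or a horizontal segment in case $(a)$), for which the argument above produces no bounded region to exploit. Here the plane does not help, but the strip does: a continuum joining the two vertical sides $\{c\}\times\mathbb{R}$ and $\{d\}\times\mathbb{R}$ of $[c,d]\times\mathbb{R}$ separates the strip into an ``upper'' and a ``lower'' part, by the linked-continua lemma (in a rectangle, a continuum joining the left and right edges meets every continuum joining the top and bottom edges). I would use density of $G(f)$ to locate graph points above and below $K$ inside the strip and then try to force the connected set $G(f)$ to cross $K$. The genuine obstacle is that the graph may reconnect its ``upper'' and ``lower'' parts \emph{outside} the strip, so one must show the crossing can be localized on $K$ itself; I expect this to require either closing $K$ up to a separating continuum inside $\Omega\setminus G(f)$, or exploiting that for a connected-graph monomial the level sets $f^{-1}(y)$ are dense, which already settles the horizontal-segment case.

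For the converse, assume $G(f)$ is totally disconnected; I must produce a continuum $K\subseteq\Omega_f$ meeting two vertical lines and avoiding $G(f)$. Fix a component $\Omega$ of $\Omega_f$ and two points $p,q\in\Omega\setminus G(f)$ with distinct first coordinates (available since $G(f)$ meets each vertical line in a single point). The tool is the classical theorem that a totally disconnected subset of the plane does not separate it; in its arcwise form it yields an arc in $\Omega\setminus G(f)$ from $p$ to $q$, and this arc is the desired $K$. Verifying the arcwise version for the dense, non-closed set $G(f)$ --- equivalently, threading a path through the single graph point on each vertical line --- is the technical heart of this direction. Granting the two plane-topology inputs highlighted above, the two implications combine with Theorem~\ref{main1} to give the stated equivalence.
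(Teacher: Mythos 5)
Your proposal is a plan with two acknowledged holes, and both sit exactly at the load-bearing steps, so it does not constitute a proof. The more serious problem is in the direction ``$G(f)$ disconnected $\Rightarrow$ there is an avoiding continuum''. You reduce (via Theorem~\ref{main1}) to $G(f)$ totally disconnected and then invoke ``a totally disconnected subset of the plane does not separate it'' to thread an arc through $\Omega_f\setminus G(f)$. That principle is a theorem about \emph{closed} (or compact, or countable) sets; for arbitrary totally disconnected sets it is false --- there are classical examples (Sierpi\'{n}ski, Mazurkiewicz) of punctiform, totally disconnected plane sets that do separate the plane. Since $G(f)$ is dense in $\Gamma_f$ and very far from closed, the tool you name simply does not apply, and you yourself flag the verification as ``the technical heart''. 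The paper takes a different and cleaner route here that avoids the issue entirely: it starts from a disconnection $G(f)\subseteq U\cup V$, arranges $U$ and $V$ to be connected domains with a common boundary $\partial U=\partial V$, uses the fact that the boundary of a planar domain is connected if and only if its complement is connected to get a \emph{closed connected} set disjoint from $G(f)$, and then uses the explicit description of $\Gamma_f$ in Theorem~\ref{Graph_Monomials} to argue that $\partial U\cap\Omega_f$ must contain a continuum meeting two distinct vertical lines (otherwise $\partial U$ would swallow an entire vertical fibre of $\Gamma_f$, contradicting that the graph meets every vertical line). Note that this argument needs neither the dichotomy theorem nor any non-separation principle for non-closed sets.

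The second gap is in the direction ``$G(f)$ connected $\Rightarrow$ $G(f)$ meets every such $K$''. Your treatment of continua with a bounded complementary component is fine (and is essentially the paper's nonempty-interior case, via density), but for a continuum with connected complement --- an arc, say --- you explicitly stop at the obstacle that the graph might reconnect its upper and lower parts outside the strip $[x_0,x_1]\times\mathbb{R}$, and only sketch two possible ways out without carrying either through. The paper's argument here is: since $K\subseteq\Omega_f$ has empty interior and avoids $\partial\Gamma_f$, the set $K\cap([x_0,x_1]\times\mathbb{R})$ separates $([x_0,x_1]\times\mathbb{R})\cap\Gamma_f$ into at least two relatively open pieces, density of $G(f)$ in $\Gamma_f$ puts graph points in each piece, and connectedness of $G(f)$ then forces $G(f)\cap K\neq\emptyset$. (One may object that this is terse on precisely the reconnection point you raise; but in any case your proposal leaves the step unproved rather than supplying an alternative.) As it stands, neither implication is established.
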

\begin{remark}
Recall that continuum means connected and compact with more than one point.
\end{remark}

\begin{proof}
The proof follows the very same arguments used by Jones \cite{jones} in his original proof for the case of additive functions. The main difference is that, for additive functions, the closure of the graph of a discontinuous additive function is the all plane and, for monomials, the corresponding sets are those shown in Theorem  \ref{Graph_Monomials}. 

Assume that $G(f)$ is not connected.
Then $G(f)\subseteq U\cup V$ with $U,V$ open subsets of the real plane,
$U\cap V=\emptyset$, $G(f)\cap U\neq \emptyset$ and $G(f)\cap V\neq \emptyset$.
We can assume that $U$ is connected,
since connected components of open subsets of $\mathbb{R}^2$ are open sets.
Indeed, by making $U$ or $V$ bigger and bigger,
just deleting properly some parts of the borders $\partial U$ or $\partial V$,
we can assume that both $U$ and $V$ are connected
and share a common border $\partial U=\partial V$.
Furthermore, this common frontier is necessarily connected,
since the connectedness of the boundary of an open domain in $\mathbb{R}^2$
is equivalent to the connectedness of its complement
(indeed, this result holds true for domains in $\mathbb{R}^n$ for all $n>1$ \cite{CKL}).

Now, the density of $G(f)$ in $\Gamma_f=\overline{\Omega_f}^{\mathbb{R}^2}$
implies that $V\cap \Gamma_f=\text{Ext}_{\Gamma_f}(U\cap \Gamma_f)$.
To prove this, we first observe that
$V\cap \Gamma_f\subseteq \text{Ext}_{\Gamma_f}(U\cap \Gamma_f)$,
since $V\cap \Gamma_f$ is an open set in the relative topology of $\Gamma_f$
which has empty intersection with $U\cap \Gamma_f$.
Thus, if  $V\cap \Gamma_f \neq \text{Ext}_{\Gamma_f}(U\cap \Gamma_f)$ ,
then there exist  $\varepsilon>0$ and $(x_0,y_0)\in \Gamma_f$ such that
$B((x_0,y_0),\varepsilon)\cap \Gamma_f \subseteq \text{Ext}_{ \Gamma_f }(U\cap \Gamma_f)\setminus V$,
which contradicts that $G(f)\subseteq U\cup V$,
since $G(f)$ has at least one point in $B((x_0,y_0),\varepsilon)\cap \Gamma_f$.

Now we can use the characterization of the sets $\Gamma_f$ given in Theorem \ref{Graph_Monomials}
to claim that  $\partial U \cap \Omega_f$ contains a continuum
which intersects two distinct vertical lines,
since otherwise $\partial U$ should contain the intersection of a vertical line with $\Gamma_f$,
a fact which leads to a contradiction, since $G(f)$ is a graph
and hence intersects all vertical lines.
This proves that, if $G(f)$ intersects
all continuum $K\subseteq \Omega_f$ which touches two distinct vertical lines,
then $G(f)$ is connected.

Let us now assume that $G(f)$ is connected and let $K\subseteq \Omega_f$ be a continuum
which touches two distinct vertical lines.
If $K$ has non-empty interior then $G(f)\cap K\neq \emptyset$, since $G(f)$ is dense in $\Gamma_f$.
If $\text{Int}(K)=\emptyset$ and $(x_0,y_0),(x_1,y_1)\in K$ with $x_0<x_1$, then,
$K\cap ([x_0,x_1]\times\mathbb{R})$ separates  $([x_0,x_1]\times\mathbb{R})\cap \Gamma_f$
in two (or more) components, since $K$ does not intersect the frontier of $\Gamma_f$.
Now, $G(f)$ contains at least a point of each one of these components, since $G(f)$ is dense in $\Gamma_f$.
It follows that $K\cap G(f)\neq \emptyset$, since  $G(f)$ is connected, by hypothesis.
Hence, if $G(f)$ is connected, then $G(f)$ intersects every continuum $K\subseteq \Omega_f$
which touches two distinct vertical lines.
\end{proof}

\begin{remark}
We can use the characterization above for another proof of the dichotomy property for monomials as follows:

If $f$ is continuous then $G(f)$ is connected.
Hence we assume that $f$ is a discontinuous $n$-monomial function.

As a first step, we reduce our study to the case of monomial functions with even degree,
by demonstrating that $G(f)$ is connected if and only if $G(g)$ is connected, where
$g(x)=xf(x)$.

The implication $G(f)$ connected implies $G(g)$ connected is trivial.
Let us prove the other implication.
Indeed, assume that $G(g)$ is connected with $g(x)=xf(x)$, $f(x)$ a $(2k+1)$-monomial function.
Let $K$ be a continuum included into $\Omega_f$ which touches two distinct vertical lines.
Then $F=\{(x,xy):(x,y)\in K\}$ is a continuum,  $F\subseteq \Omega_g$,
and $F$  touches two distinct vertical lines.
Hence Theorem \ref{main} and the connectedness of $G(g)$ imply that
there exists $x_0\neq 0$ such that $(x_0,g(x_0))=(x_0,x_0f(x_0))\in F$.
Thus $(x_0,f(x_0))\in K$ and $G(f)$ contains a point of $K$.
It follows, again from Theorem \ref{main}, that $G(f)$ is connected.

Let us thus assume (with no loss of generality) that $n=2k$ is even.
Thanks to Theorem \ref{Graph_Monomials} we can also assume with no loss of generality that
$\Gamma_f =\mathbb{R}^2$ or
$\Gamma_f =\{(x,y):y\geq \beta x^{2k}\}$ for a certain $\beta\in\mathbb{R}$,
since the other cases have analogous proofs.

If $G(f)$ is not connected,
there exist a continuum $K\subseteq \Omega_f$ with empty interior
and two points $(x_0,y_0),(x_1,y_1)\in K$ with $x_0<x_1$,
such that $G(f)\cap K=\emptyset$.
Obviously, the continuum $K$ separates $(]x_0,x_1[\times \mathbb{R}) \cap \Gamma_f$
in several disjoint  open subsets of $\Gamma_f$ (with the relative topology).
Hence we can assume that
$$(]x_0,x_1[\times \mathbb{R})\cap\Gamma_f \setminus K=U_K\cup V_K,$$
with $U_K,V_K$ disjoint open subsets of $\Gamma_f$, $U_K$ connected, and
$\{\alpha\}\times [\beta,+\infty)\subseteq U_K$ for certain $\alpha\in ]x_0,x_1[$ and $\beta>0$.

Let us set $U_K^*=U_K\cup \{(x,y)\in \partial U_K: (x,y)\not\in\partial V_K\}$,
$V_K^*=V_K\cup \{(x,y)\in \partial V: (x,y)\not\in\partial U_K\}$.
Then $U_K^*,V_K^*$ are open connected subsets of $\Gamma_f$,
$U_K^*\cap V_K^*=\emptyset$,
$G(f)\cap ]x_0,x_1[\times \mathbb{R} \subseteq U_K^*\cup V_K^*$,
$K^*=\partial U_K^*=\partial V_K^*$ is a continuum
which separates $ ]x_0,x_1[\times \mathbb{R} \cap \Gamma_f$
in exactly two disjoint open connected subsets of $\Gamma_f$,
$U_{K^*}=U_K^*$ and $V_{K^*}=V_K^*$.
$G(f)\cap U_{K^*}\neq \emptyset$ and $G(f)\cap V_{K^*}\neq \emptyset$.
Furthermore, the relation $f(\lambda x)=\lambda^nf(x)$ for all $x\in\mathbb{R}$
and all $\lambda \in\mathbb{Q}$ implies that $G(f)\cap \varphi_{\lambda}( K^*)=\emptyset $
for all rational number $\lambda\neq 0$, where $\varphi_{\lambda}(x,y)=(\lambda x,\lambda^ny)$.

Let us prove that the connected component of $G(f)$
which contains the point $(x,f(x))$ with $x\in ]x_0,x_1[$,
is the set $\{(x,f(x))\}$.
To prove this, we note that the sets $U_K^*,V_K^*$ separate any of these points
from the points $(y,f(y))$ of the graph satisfying $y\not\in ]x_0,x_1[$.
Thus it is only necessary to consider, to prove our claim, the following two cases:

\noindent \textbf{Case 1: $(x,f(x)) \in U_K^*$.}
The density of $G(f)$ in $\Gamma_f$ implies  there exist
an infinite sequence of open intervals $]a_n,b_n[ \subset ]x_0,x_1[$
such that $a_n<x<b_n$, $\lim_{n\to\infty}|a_n-b_n|=0$,
$(a_n,f(a_n)),(b_n,f(b_n)) \in V_{K^*}$.

\begin{figure}[htb]
\centering
\includegraphics[scale=0.3]{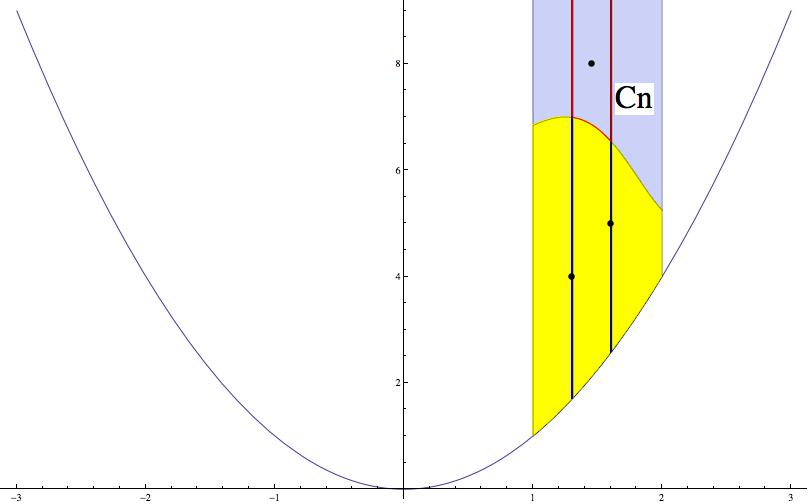}
\caption[]{A visualization of the sets $C_n$}
\end{figure}

Hence
$$ C_n=((\{a_n,b_n\}\times\mathbb{R})\cap U_{K^*})\cup (K^* \cap\overline{U_{K^*}})$$
is a sequence of connected subsets of the plane
which separates the point $(x,f(x))$ from any other point
$(y,f(y))$ with $y\neq x$, $y\in ]x_0,x_1[$,
and $G(f)\cap C_n=\emptyset$ for all $n$ (see the Figure).
It follows that $\{(x,f(x))\}$ is the connected component which contains the point $(x,f(x))$.

\noindent \textbf{Case 2: $(x,f(x)) \in V_K^*$.}
This case has an analogous proof to Case 1.

The proof ends now easily.
Indeed, if $(x,f(x))$ is any point of $G(f)$,
there exists $\lambda\in\mathbb{Q}$ such that
$x\in ]\lambda x_0,\lambda^nx_1[$ (since $\mathbb{Q}$ is a dense subset of $\mathbb{R}$)
and we can use the arguments above with $\varphi_{\lambda}(K^*)$ instead of $K^*$.


\end{remark}

\bibliographystyle{amsplain}


\bigskip

\footnotesize{Jose Maria Almira

Departamento de Matem\'{a}ticas, Universidad de Ja\'{e}n, Spain

E.P.S. Linares,  C/Alfonso X el Sabio, 28

23700 Linares (Ja\'{e}n) Spain

e-mail address: jmalmira@ujaen.es }



\vspace{1cm}

\footnotesize{Zolt\'{a}n~Boros

Institute of Mathematics,
University of Debrecen

P. O. Box: 12.

H--4010 Debrecen,
Hungary

e-mail address: zboros@science.unideb.hu }



\end{document}